\DeclareFontFamily{U}{stix2bb}{}
\DeclareFontShape{U}{stix2bb}{m}{n}{<->stix2-mathbb}{}
\numberwithin{equation}{section} 
\theoremstyle{plain}
\newtheorem{prop}[equation]{Proposition}
\newtheorem{cor}[equation]{Corollary}
\def\cxi{\mathbf{i}}
\def\oM{\overline{\mathcal{M}}}
\title{Faber's socle intersection numbers via Gromov--Witten theory of elliptic curve}
\author{Xavier Blot}
\address{X.~B.: Korteweg-de Vriesinstituut voor Wiskunde, Universiteit van Amsterdam, Postbus 94248, 1090GE Amsterdam, Nederland}
\email{x.j.c.v.blot@uva.nl}	
\author{Sergey Shadrin}
\address{S.~S.: Korteweg-de Vriesinstituut voor Wiskunde, Universiteit van Amsterdam, Postbus 94248, 1090GE Amsterdam, Nederland}
\email{s.shadrin@uva.nl}	
\author{Ishan Jaztar Singh}
\address{I.~J.~S.: Dipartimento di Matematica ``Tullio Levi-Civita'', Universit\`a degli studi di Padova,
	Via Trieste 63, 35121 Padova, Italia}
\email{jaztar@math.unipd.it}
\begin{document}

\begin{abstract}
The goal of this very short note is to give a new proof of Faber's formula for the socle intersection numbers in the tautological ring of $\mathcal{M}_g$. This new proof exhibits a new beautiful tautological relation that stems from the recent work of Oberdieck--Pixton on the Gromov--Witten theory of the elliptic curve via a refinement of their argument, and some straightforward computation with the double ramification cycles that enters the recursion relations for the Hamiltonians of the KdV hierarchy.
\end{abstract}

\maketitle

\tableofcontents

\section{Introduction}

Faber's formula~\cite{Faber} for the socle intersection numbers in the tautological ring of $\mathcal{M}_g$ has now several proofs, with quite different geometric ideas behind them. It can be stated in terms of the Deligne--Mumford compactification of the moduli spaces of curves $\oM_{g,n}$, $g\geq 1$, $n\geq 1$, as 
\begin{align} \label{eq:socle}
\int_{\oM_{g,n}} \lambda_g\lambda_{g-1} \prod_{i=1}^n \psi_i^{d_i} = \frac{(-1)^{g-1}B_{2g}(2g-3+n)! }{2^{2g-1}\cdot (2g)!} \prod_{i=1}^n \frac 1{(2d_i-1)!! }, 
\end{align}
for all $d_1,\dots,d_n\geq 0$ such that $d_1+\cdots+d_n = g-2+n$. Here $B_{2g}$ denote the Bernoulli numbers.  Originally it was conjectured in~\cite{Faber} and by now it is a well-established statement with the following five proofs based on a variety of quite different ideas: 
\begin{itemize}
	\item Virasoro constraints for the projective plane, \cite{Getzler-Pand, Givental}.
	\item Mumford's formula and related combinatorics, \cite{Liu-Xu}.
	\item Intersections with double ramification cycles, \cite{Buryak-Sha}.
	\item The 3-spin relations and related combinatorics, \cite{Picton-Thesis}.
	\item The half-spin relations and related combinatorics, \cite{GF1,GF2}.
\end{itemize}
The purpose of this note is to give yet another proof that is based on a new tautological relation that follows from a recent work of Oberdieck--Pixton on the Gromov--Witten theory of the elliptic curve~\cite{pixton_oberdieck_18,oberdieck2023quantumcohomologyhilbertscheme}. 

Although the utility of a new proof of a well-known statement might be questionable, it involves a completely new set of powerful ideas and allows us to exhibit a new tautological relation, which are all to be used in the analysis of the quantum integrable systems of Buryak--Rossi~\cite{BurRossi-Quantum}.

\subsection{Acknowledgments} 
The authors thank Paolo Rossi for useful discussions.

X.B. and S.S. were supported by the Dutch Research Council grant OCENW.M.21.233.  I.J.S. is supported by the University of Padova and the Marie Curie Fellowship (project ID 741896) and is affiliated with the INFN under the national project MMNLP. 

\section{New tautological relation} We consider a subset $N$ of the set of stable graphs corresponding to the strata in $\oM_{g,m}$, $g\geq 1$, $m\geq 1$, which have the form of a necklace:
\begin{itemize}
	\item There are $m$ vertices and to each of them we attach exactly one leaf labeled by $i$, $i=1,\dots,m$.
	\item There are $m$ edges, and they connect the vertices in one cycle of length $m$.
\end{itemize}
There are $(m-1)!/2$ graphs of this shape for $m\geq 2$ (for $m=2$ it is just one graph, but then with the automorphism group of order $2$). The graphs $\Gamma\in N$ are further equipped by the genus function $g\colon V(\Gamma)\to \mathbb{Z}_{\geq 0}$, where $V(\Gamma)=\{v_1,\dots,v_m\}$ is the set of vertices, and we assume that the leaf $i$ is attached to $v_i$, $i=1,\dots,m$. The genus condition reads $\sum_{i=1}^m g(v_i)=g-1$. 

It will be a bit more convenient to introduce an orientation on these necklace graphs and consider the set of wheels $\vec{N}$ (necklaces with an added choice of orientation); then we have exactly $(m-1)!$ wheels. 

\begin{prop} \label{prop:relation}
We have the following relation in $H^*(\oM_{g,m})$: 
	\begin{align} \label{eq:Relation}
		\frac 1{(m-1)!} \!\!\!\!\sum_{\footnotesize\substack{\Gamma \in \vec{N}\\ g\colon V(\Gamma)\to\mathbb{Z}_{\geq 0}}} \!\!\!\! (\mathsf{b}_{\Gamma,g})_*\bigg(\bigotimes_{i=1}^m \mathrm{DR}_{g(v_i)}(0,1,-1) \lambda_{g(v_i)} \bigg) = \frac{2\cdot (2g)!}{(-1)^{g-1}B_{2g}(2g-2+m)!} \lambda_g\lambda_{g-1}\prod_{i=1}^m\psi_i \sum_{i=1}^m \frac{1}{\psi_i}.
	\end{align}
	Here $\mathsf{b}_{\Gamma,g}\colon \prod_{i=1}^m \oM_{g(v_i),3} \to \oM_{g,m}$ is the boundary map, $\mathrm{DR}_{g(v_i)}(0,1,-1)$ is the double ramification cycle on $\oM_{g(v_i),3}$ assigned to $v_i$, where the multiplicity $0$ is assigned to the marked point corresponding to the leaf.    
\end{prop}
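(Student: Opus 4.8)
The plan is to derive the relation~\eqref{eq:Relation} from the Gromov--Witten theory of the elliptic curve, following Oberdieck--Pixton but extracting a more refined statement than they need. The starting point is the stationary GW theory of $E$ in degree zero (or the full quasimodularity structure), where correlators of the form $\langle \prod_i \tau_{d_i}(\omega)\rangle$ are quasimodular forms. The key input is that such generating series satisfy differential equations / holomorphic-anomaly-type relations coming from the Oberdieck--Pixton analysis, and that the failure of the corresponding cycle classes to be modular is governed precisely by boundary contributions. I would set up the degeneration / localization computation that produces, on the cycle level in $H^*(\oM_{g,m})$, a sum over stable graphs, and then argue that after specializing the relevant parameter (the elliptic variable, or equivalently taking the appropriate constant term in $q$), only the necklace/wheel strata survive with the stated $\mathrm{DR}\cdot\lambda$ insertions at each vertex.

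The concrete mechanism I would use to produce $\mathrm{DR}_{g(v_i)}(0,1,-1)$ on each vertex is the standard appearance of double ramification cycles in the GW theory of a target with a chosen divisor: the two edges of the necklace meeting at $v_i$ carry ramification profiles $+1$ and $-1$ (recording the degree of the map across the node in the elliptic fiber direction), while the leaf carries profile $0$. The cyclic/necklace combinatorics is exactly what one gets from a single closed orbit of the torus action (or a single cycle in the degeneration graph) — this explains why precisely the wheels $\vec N$ appear, and the factor $1/(m-1)!$ is the automorphism normalization of the oriented cycle. The $\lambda_{g(v_i)}$ factor at each vertex should come from the Hodge-theoretic contribution of the elliptic target (the top Chern class of the Hodge bundle entering through the obstruction theory / the $\lambda$-class insertion characteristic of GW theory of a curve of genus one). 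I would assemble these vertex contributions via the boundary pushforward $(\mathsf b_{\Gamma,g})_*$ and sum over the genus distribution $\sum g(v_i)=g-1$.

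For the right-hand side, I would identify the $\lambda_g\lambda_{g-1}$ socle contribution. The point is that the complementary term in the GW relation — the ``leading'' or ``disconnected-to-connected'' piece — lives in the tautological ring in the socle degree, and its normalization is fixed by comparing with the known genus-one or unramified evaluation of the elliptic GW series. The explicit constant $2(2g)!/\big((-1)^{g-1}B_{2g}(2g-2+m)!\big)$ together with $\prod_i\psi_i\sum_i\psi_i^{-1}$ (i.e.\ $\sum_i \prod_{j\neq i}\psi_j$) should be read off by matching the lowest insertions: the $\psi$-powers come from the descendent insertions $\tau_{d_i}$ and the $\sum_i\psi_i^{-1}$ structure reflects the single ``marked'' vertex where the descendent degree drops by one relative to the others. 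I expect the Bernoulli number and the $(2g)!$ to enter through the constant term of the relevant Eisenstein/quasimodular form (the GW theory of $E$ naturally produces $B_{2g}/(2g)!$ via $\zeta$-values), so the coefficient extraction is essentially a quasimodular-forms constant-term computation.

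The main obstacle I anticipate is the careful bookkeeping that upgrades Oberdieck--Pixton's \emph{numerical} quasimodularity statement to an \emph{on-the-cycle} tautological relation in $H^*(\oM_{g,m})$ with the exact constants, rather than merely up to lower-order strata. Concretely, I must show that all stable graphs other than the wheels either cancel or can be moved to the tautological relation's right-hand side as genuine $\lambda_g\lambda_{g-1}\psi$-classes, and that the DR cycles appearing really are $\mathrm{DR}_{g(v_i)}(0,1,-1)$ with the correct $\lambda_{g(v_i)}$ twists and no extra $\psi$- or $\kappa$-corrections at the nodes. This requires a refinement of their degeneration argument that tracks each stratum's coefficient, and I would isolate it as the technical heart of the proof; once the wheel strata are the only survivors, matching the two sides reduces to the constant-term evaluation described above, which is routine.
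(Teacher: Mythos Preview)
Your overall architecture is right --- the relation does come from Oberdieck--Pixton's Gromov--Witten theory of the elliptic curve, and the necklace graphs with $\mathrm{DR}$-decorated vertices are indeed the survivors of a graph sum --- but several of the specific mechanisms you invoke are not the correct ones, and without them the argument does not go through.

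First, the reason only necklaces survive is not a parameter specialization or a constant-term extraction in $q$. The paper's graph-sum formula for $c_{g,m}(p^{\otimes m})$ already restricts to graphs in which every edge lies on a cycle; what then kills all graphs with first Betti number $\geq 2$ is the multiplication by $\lambda_{g-1}$. On a boundary stratum whose dual graph has first Betti number $b$, the Hodge bundle acquires a trivial rank-$b$ quotient, so $\lambda_{g-1}$ vanishes unless $b\leq 1$. Combined with the no-bridge condition this forces $b=1$, i.e.\ a single cycle, and since each vertex carries exactly one leaf one lands precisely on the wheels $\vec N$.

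Second, the $\lambda_{g(v_i)}$ factors do not come from the obstruction theory of the elliptic target. They come from the same $\lambda_{g-1}$: on a necklace stratum with vertex genera $g_1,\dots,g_m$ summing to $g-1$, the pullback of $\lambda_{g-1}$ is exactly $\prod_i \lambda_{g_i}$. This is also what forces, at each vertex, only the top-degree piece $\Delta_{g_i}(0,0,2g_i)\lambda_{g_i}=\tfrac12\,\mathrm{DR}_{g_i}(0,1,-1)\lambda_{g_i}$ to survive from the DR-polynomial expansion.

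Third, the right-hand side is not a ``complementary'' or ``leading'' term extracted from the same graph sum. It is an \emph{independent} computation of the very same class $c_{g,m}(p^{\otimes m})\lambda_{g-1}$, obtained by Oberdieck--Pixton via the holomorphic anomaly equation, and it equals the stated $\lambda_g\lambda_{g-1}\,\psi$-expression times $(q\partial_q)^{m-1}G_{2g}(q)$. The proof is then: compute $c_{g,m}(p^{\otimes m})\lambda_{g-1}$ two ways and equate. The Bernoulli number enters through $G_{2g}$ on the holomorphic-anomaly side, not as a ``constant term'' on the graph side.

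Finally, the coefficient in front of each necklace is not a degree-zero or constant-term extraction but a \emph{top-weight} extraction. One knows a priori (from the other computation) that the answer is quasimodular of pure weight $2g-2+2m$; the kissing-weight sum over $a\in\mathbb{Z}_{\neq 0}$ is rewritten via the shifted Weierstra\ss\ function $\overline\wp_\tau(z)$, and only the leading $(q\partial_q)^{m-1}/(m-1)!$ piece of the edge operator contributes at that weight, producing $\tfrac{2}{(m-1)!}(q\partial_q)^{m-1}G_{2g}$ uniformly for every wheel. Matching this against the holomorphic-anomaly side cancels the quasimodular factor and yields~\eqref{eq:Relation}. Your proposal as written would not locate these steps, so the ``routine constant-term evaluation'' you anticipate is aimed at the wrong target.
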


\begin{proof} This tautological relation comes from two independent computations of the class $c_{g,m}(p^{\otimes m})\lambda_{g-1}$, $g\geq 1$, $m\geq 1$ in~\cite{oberdieck2023quantumcohomologyhilbertscheme}, where $c_{g,m}(p^{\otimes m})\in H^*(\oM_{g,m})\otimes \mathbb{C}[[q]]$ are the classes of the cohomological field theory associated to the Gromov--Witten theory of the elliptic curve, where all primary fields are the classes of a point in the target curve, and $q$ is the variable that controls the degree of the stable maps.
	
Let $\mathrm{QMod} = \mathbb{C}[G_2, G_4, G_6]$ be the algebra of quasimodular forms, where $G_k(q)$ is the
\(k\)-weighted Eisenstein series is given by
\begin{equation}
	G_k(q) = -\frac{B_k}{2k} + \sum_{n \geq 1}q^n \sum_{d \mid n} d^k, \qquad k=2,4,6,\dots.
\end{equation}
It is graded by nonnegative even integers, $\mathrm{QMod} = \bigoplus_{\ell=0}^\infty \mathrm{QMod}_{2\ell}$, where the grading is given by the weight of the modular forms $\mathrm{wt} (G_k) \coloneq k$, $k=2,4,6$.  Note that for all even \(k \geq 2\), $\mathrm{QMod}_k \ni G_k$. Note also that the operator $ q\partial_q$ acts on $\mathrm{QMod}$ increasing the weight by $2$. 

Using the holomorphic anomaly equation, Oberdieck and Pixton prove \cite[Proposition 6.8]{oberdieck2023quantumcohomologyhilbertscheme} that
\begin{align}\label{eq:FirstFormula}
	c_{g,m}(p^{\otimes m})\lambda_{g-1} =  \frac{2\cdot (2g)!}{(-1)^{g-1}B_{2g}(2g-2+m)!} \lambda_g\lambda_{g-1}\prod_{i=1}^m\psi_i \sum_{i=1}^m \frac{1}{\psi_i} \times (q\partial_q)^{m-1}G_{2g}(q).
\end{align}
On the other hand, in~\cite[Proof of Theorem 5]{pixton_oberdieck_18} they give a formula for $c_{g,m}(p^{\otimes m})$ as a sum over graphs:
\begin{align}
	\label{eqn: cgn as sum of graphs}
	c_{g,m}\left(p^{\otimes m}\right) & =\sum _{\Gamma}\frac{1}{|\mathrm{Aut}(\Gamma)|} \sum_k (\mathsf{b}_\Gamma)_{*} \left(\bigotimes_{i=1}^{m}  \Delta_{g_i}\big((k(h))_{h\in v_i}\big) \right) 
	\\ \notag &
	\qquad \qquad \qquad \qquad \times \prod_{\footnotesize\substack {e=\{h,h'\} \\ e \text{ is a loop}}} 2(-1)^{k(h)} \Big(\tfrac{B_{k(h)+k(h')+2}}{2(k(h)+k(h')+2)}+G_{k(h)+k(h')+2}\Big) 
	\\ \notag & 
	\qquad \qquad \qquad \qquad \times \sum_w \!\!\!\prod_{\footnotesize\substack {e=\{h,h'\} \\ e \text{ is not a loop}}}  \!\!\! \frac{(-1)^{k(h')}w(h)^{k(h)+k(h')+1}}{1-q^{w(h)}}.
\end{align}
Here
\begin{itemize}
	\item \(\Gamma\) is a stable graph representing a boundary stratum in $\oM_{g,n}$ and $\mathsf{b}_\Gamma$ is the corresponding boundary map. We think of edges and vertices as subsets of the set  $H(\Gamma)$ of the  half-edges of $\Gamma$. 
	\item  The $m$ legs of \(\Gamma\) labeled by $1,\dots,m$ are attached to $m$ pairwise different vertices \(v_1, \dots, v_m\) of genera $g_1,\dots,g_m\geq 0$, and there are no further additional vertices. 
	\item Each edge of $\Gamma$ can be included in a cycle in the graph, that is, if we cut any edge, the graph remains connected. Loops are allowed. 
	\item The sum over $k$ is the sum over all possible maps $k\colon H(\Gamma)\to \mathbb{Z}_{\geq 0}$ such that for every loop $e=(h,h')$ the sum $k(h)+k(h')$ is even and $k(h)=0$ if $h$ is a leaf. 
	\item The sum over $w$ is the sum over all possible systems of ``kissing weights'' on the set of half-edges of $\Gamma$, which are the functions  $w\colon \mathrm{H}(\Gamma)\rightarrow \mathbb{Z}$ such that
	\begin{itemize}
		\item $w(h)+w(h')=0$ for every edge $e=\{h,h'\}$;
		\item $\sum_{h\in v}w(h)=0$ for every vertex $v$.
		\item $w(h)=0$ whenever $h$ is a leaf or if it belongs to a loop.
	\end{itemize}
	\item If $e=(h,h')$ is not a loop, then it is assumed that $h$ is attached to a vertex with smaller index than $h'$. This gives an orientation on each edge that we refer to as \emph{index orientation} below. 
	\item The classes $\Delta_{\tilde g}(k_1,\dots,k_{\tilde m})$ are defined in such a way that for any $a_1,\dots,a_{\tilde m}\in \mathbb{Z}$, $a_1+\cdots+a_{\tilde m}=0$, we have 
\begin{align}
	\mathrm{DR}_{\tilde g}(a_1,\dots,a_{\tilde m}) = 
	\sum_{k_1,\dots,k_{\tilde m}\in \mathbb{Z}_{\geq 0}} \Delta_{\tilde g}(k_1,\dots,k_{\tilde m}) \prod_{i=1}^{\tilde m} a_i^{k_i}.
\end{align}	
and the latter expression is a symmetric polynomial in $a_i$'s (such expression exists by~\cite{PixPoly,spelier2024polynomialitydoubleramificationcycle}). 
\end{itemize}

When we multiply this expression by $\lambda_{g-1}$, we get contributions only from the necklace graphs for $m\geq 2$, and the contributions from the graph with one vertex and no edges and the graphs with one vertex and one loop for $m=1$. The class assigned to the vertex $v_i$ is subsequently multiplied by $\lambda_{g_i}$. 

In all cases we use only the following classes 
\begin{align}
\Delta_{g_i}(0,0,2g_i)\lambda_{g_i}=\Delta_{g_i}(0,2g_i,0)\lambda_{g_i}=\frac 12\mathrm{DR}_{g_i}(0,1,-1)\lambda_{g_i}	,
\end{align}
to decorate the vertices (we assume here that the first index is for $k(h)=0$, where $h$ is the leaf attached to $v_i$). Finally, we know that the resulting formula should be a quasimodular form of pure weight $2g-2+2m$ (as prescribed by Equation~\eqref{eq:FirstFormula}), which allows us to drastically simplify the computation of the coefficients of the relevant graphs. To this end it might be seen as a refinement of~\cite[Proof of Lemma 6.6]{oberdieck2023quantumcohomologyhilbertscheme}.

In the case $m=1$, the graph with no edges doesn't contribute anything of weight other than $0$ (in fact, it contributes a constant term that offsets the $B_{2g}/4g$ constant in the second line of~\eqref{eqn: cgn as sum of graphs} in the one-loop graph). Thus, the final result is simply given by
\begin{align}
	c_{g,1}\left(p\right)\lambda_{g-1} = \frac{1}{2} (\mathsf{b}_\Gamma)_* (\mathrm{DR}_{g-1}(0,1,-1)\lambda_{g-1}) \cdot 2G_{2g}(q),
\end{align}
where $\Gamma$ is the one-vertex graph with a single loop (and its group of automorphisms has order $2$). This expression matches exactly the left-hand side of~\eqref{eq:Relation} for $m=1$.

In the case $m\geq 2$, there are $(m-1)!/2$ necklace graphs (for $m=2$ it is just one graph, whose automorphism group has order $2$), but we prefer to consider them as $(m-1)!$ oriented necklace graphs, and then divide the resulting expression by $2$, and this way it works equally well for $m=2$, with an additional choice of the genus labels $g_1,\dots,g_m$ for the vertices. The vertices are decorated by $\mathrm{DR}_{g_i}(0,1,-1)\lambda_{g_i}$, and the kissing weights are fully determined by the choice of one kissing weight $a\in \mathbb{Z}$, say, for the half-edge attached to the vertex $v_1$ that points in the direction prescribed by the orientation of the necklace. 

The coefficient for each necklace is then given by
\begin{align}
	\sum_{a\in \mathbb{Z}_{\not= 0}} a^{2g-2} \left(\frac{a}{1-q^a}\right)^{j_+} \left(\frac{-a}{1-q^{-a}}\right)^{j_-},
\end{align}
where $j_+$ (resp., $j_-$) is the number of edges, whose index orientation agrees (resp., disagrees) with the orientation of the necklace. Note that $j_++j_-=m$ and with our conventions $j_+\geq 1$. The latter coefficient can then be computed as
\begin{align} \label{eq:COefficient-to-compute}
\widetilde{\lim_{p\to 1}}	\left(\sum_{a\in \mathbb{Z}_{\not= 0}} (p\partial_p)^{2g-2} P_{j_+,j_-}(p\partial_p, q\partial_q)  \frac{a}{1-q^a} p^a \right),
\end{align}
where $P_{j_+,j_-}$ is some polynomial, the series expansion is assumed to be taken in the ring $0<|q|<|p|<1$, and the operation $\widetilde{\lim}_{p\to 1}$ takes the limit of the expression that is regularized, if necessary, by removing the principal part of the singularity at $p=1$ (as we see below, the principal part doesn't depend on $q$). 

In order to compute~\eqref{eq:COefficient-to-compute}, we rely on the following observations (see e.g.~\cite{pixton_oberdieck_18})):
\begin{itemize}
	\item  In the ring $0<|q|<|p|<1$ the sum $$\sum_{a\in \mathbb{Z}_{\not= 0}} \frac{a}{1-q^a} p^a ,$$ is equal to the shifted by constant Weierstra\ss{} function $\overline{\wp}_\tau(z)\coloneqq {\wp}_\tau(z) + 2G_2(q)$, where $q=\exp(2\pi\cxi \tau)$ and $p=\exp(2\pi\cxi z)$.
	\item The shifted Weierstra\ss{} function $\overline{\wp}_\tau(z)$ expands at $z=0$ as 
	\begin{align} \label{eq:wp-expansion}
		\frac 1{(2\pi\cxi z)^2} + 2 \sum_{\ell=0}^\infty G_{2\ell+2}(q) \frac{(2\pi\cxi z)^{2\ell}}{(2\ell)!}.		
	\end{align}
	\item The limit $p\to 1$ in the ring $0<|q|<|p|<1$ in~\eqref{eq:COefficient-to-compute} is equal to the limit $z\to 0$ for $0<\mathrm{Im}(z)<\mathrm{Im}(\tau)$ in terms of the shifted Weierstra\ss{} function regularized by removing the principal part at $z=0$. 
	\item Note that the principal part of $(\frac{1}{2\pi\cxi}\partial_z)^{2g-2} P_{j_+,j_-}(\frac{1}{2\pi\cxi}\partial_z, q\partial_q) \overline{\wp}_\tau(z)$ at $z=0$ is independent of $\tau$.
\end{itemize}
Now, consider~\eqref{eq:COefficient-to-compute} in terms of $\overline{\wp}_\tau(z)$. Note that with respect to the weight grading, $\mathrm{wt}(p\partial_p) = 1$ and $\mathrm{wt}(q\partial_q) = 2$. Moreover, for arbitrary $j_+,j_-$, the polynomial 
\begin{align}
P_{j_+,j_-}(p\partial_p,q \partial_q) = \frac{(q\partial_q)^{m-1}}{(m-1)!}  + (\text{lower order terms with respect to the weight grading}). 	
\end{align}
So, since we are only interested in the pure weight grading $2g-2+2m$ and it appears to be the top weight grading in Equation~\eqref{eq:COefficient-to-compute}, we only have to apply $\lim_{z\to 0}$ to 
\begin{align}
 \frac{1}{(m-1)!}\left(q\partial_q\right)^{m-1} \left(\frac{1}{2\pi\cxi}\partial_z\right)^{2g-2} \overline{\wp}_\tau(z),
\end{align}
where the latter expression is regular at $z=0$ and the limit gives $\frac{2}{(m-1)!} (q\partial_q)^{m-1} G_{2g}(q)$. 

Thus, for $m\geq 2$, for the pure weight $2g-2+2m$, we obtain the sum over all oriented necklace graphs, whose vertices are decorated by $\mathrm{DR}_{g_i}(0,1,-1)$, with the coefficient $\frac 12 \cdot \frac{2}{(m-1)!} = \frac{1}{(m-1)!}$. This is exactly the expression that we have on the left-hand side of~\eqref{eq:Relation}. 
\end{proof}

\section{Intersection numbers with the double ramification cycles}

In order to intersect Equation~\eqref{eq:Relation} with $\psi$-classes, we need certain specializations of the following general formula:

\begin{prop} \label{prop:intersection}
For any $a_1,a_2\in\mathbb{Z}$ and for any $g\geq 0$ we have:
	\begin{align} \label{eq:DR-intersection}
		\int_{\oM_{g,3}} \mathrm{DR}_g(-a_1-a_2,a_1,a_2)\lambda_g \psi_1^{g} = \sum_{j=0}^g \frac{(2j-1)!!}{(2g+1)!!(2j)!!} \frac{(a_1+a_2)^{2j}(a_1^2-a_1a_2+a_2^2)^{g-j}}{12^g}. 
	\end{align}
\end{prop}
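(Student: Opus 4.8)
The plan is to reduce the integral to the compact-type locus and then apply Hain's formula for the double ramification cycle. The key input is the vanishing $\lambda_g|_{\Delta_{\mathrm{irr}}}=0$: on any boundary stratum carrying a non-separating node the pullback of the Hodge bundle acquires a trivial summand, so its top Chern class dies. Since the difference between $\mathrm{DR}_g$ and its Hain representative is a tautological class supported on the non-compact-type locus, that difference is annihilated after multiplication by $\lambda_g$. Thus I may replace $\mathrm{DR}_g(-a_1-a_2,a_1,a_2)$ by $\tfrac1{g!}\eta^g$, where $\eta=\tfrac12\sum_i b_i^2\psi_i-\tfrac14\sum_{h,S}b_S^2\delta_{h,S}$ is Hain's class on $\oM_{g,3}^{\mathrm{ct}}$, the boundary sum running over ordered splittings, with $(b_1,b_2,b_3)=(-a_1-a_2,a_1,a_2)$ and $b_S=\sum_{i\in S}b_i$.

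The second step is to expand $\eta^g$ and integrate against $\lambda_g\psi_1^g$. The purely interior part, coming from the $\psi$-terms of $\eta$, is a sum of monomials $\int_{\oM_{g,3}}\psi_1^{d_1}\psi_2^{d_2}\psi_3^{d_3}\lambda_g$ with $\sum d_i=2g$, each evaluated by the Faber--Pandharipande $\lambda_g$-formula. The boundary contributions come from the $\delta_{h,S}$-terms, where the essential simplification is that on a separating node $\oM_{h,\bullet}\times\oM_{g-h,\bullet}$ one has $\lambda_g|_{\delta}=\lambda_h\boxtimes\lambda_{g-h}$, while $\eta$ restricts to the sum of the two Hain classes on the factors. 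Because $\psi_1^g$ lives entirely on the component carrying the first marking, each boundary stratum produces a product of strictly-lower-genus integrals of exactly the same shape, so the computation organizes into an induction on $g$ where one peels off bubbles. One must track the ordered-versus-unordered convention in the boundary sum, as this is precisely what pins down the coefficients: for $g=1$ the only surviving divisor is the one bubbling off the markings $2,3$, and its contribution cancels the $b_1^2$ interior term, leaving $(a_1^2+a_2^2)/24$, in agreement with \eqref{eq:DR-intersection}.

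The hard part will be the resummation. The interior evaluations carry the Faber--Pandharipande constant $\tfrac{2^{2g-1}-1}{2^{2g-1}}\tfrac{|B_{2g}|}{(2g)!}$, and it is only after combining them with all boundary contributions that the factor $2^{2g-1}-1$ disappears and the answer collapses to the right-hand side of \eqref{eq:DR-intersection}; this nontrivial cancellation is already invisible at $g=1$ (where $2^{2g-1}-1=1$), so the mechanism must really be checked from $g\ge2$ onward. To control it I would repackage the target as a generating function: writing $u=(a_1+a_2)^2$ and $v=a_1^2-a_1a_2+a_2^2$, the right-hand side equals $\tfrac1{(2g+1)!!\,12^g}$ times the $t^g$-coefficient of $(1-ut)^{-1/2}(1-vt)^{-1}$, since $4^{-j}\binom{2j}{j}=\tfrac{(2j-1)!!}{(2j)!!}$. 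The factorization into a half-integer factor $(1-ut)^{-1/2}$ and a geometric factor $(1-vt)^{-1}$ mirrors the split into one $\lambda_g$-heavy half and one boundary half of the induction, and I would exploit this to reduce the Bernoulli-number bookkeeping to a single combinatorial identity. Since both sides are homogeneous of degree $2g$ in $(a_1,a_2)$, matching them degree by degree together with the base case $g=1$ should then close the argument.
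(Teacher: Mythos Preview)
Your approach is genuinely different from the paper's, and the paper's route is much shorter. The paper does not touch Hain's formula or the Faber--Pandharipande $\lambda_g$-integrals at all. Instead, it invokes the splitting relation for $\psi_1$ on a double ramification cycle from \cite{BSSZ}: after multiplying by $\lambda_g$ (which kills every term supported on a non-separating node) this collapses to the two-term recursion
\[
(2g+1)\,I_g \;=\; \frac{(a_1+a_2)^{2g}}{24^g\,g!} \;+\; \frac{a_1^2-a_1a_2+a_2^2}{12}\,I_{g-1},
\]
where $I_g$ is the integral in question and the first term comes from the known value $\int_{\oM_{g,2}}\mathrm{DR}_g(-b,b)\lambda_g\psi_1^{g-1}=b^{2g}/(24^g g!)$. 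No Bernoulli numbers enter, and the claimed closed form is checked against this recursion in one line. So the cancellation of the $2^{2g-1}-1$ factor that worries you simply never arises in the paper's argument.

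Your Hain-formula plan is based on correct ingredients (the restriction of $\mathrm{DR}_g\lambda_g$ to compact type, the multiplicativity of $\lambda_g$ and of $\mathrm{DR}$ across separating nodes), but as written it is not a proof: the decisive step---the resummation that turns the Faber--Pandharipande constants plus all boundary corrections into the stated closed form---is only described as something you ``would'' do, with no identity actually established. Two smaller issues: the claim that each boundary stratum yields an integral ``of exactly the same shape'' is imprecise, since $\delta_{h,\{1\}}$ and $\delta_{h,\{2,3\}}$ put the first marking on a \emph{two}-pointed factor, not a three-pointed one; and expanding $\eta^g$ produces products of several $\delta$-classes landing in deeper strata, so a one-bubble-at-a-time induction needs more justification than you give. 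If you want to salvage this route, the cleanest fix is to bypass the binomial expansion of $\eta^g$ entirely and use directly that $\mathrm{DR}_g\lambda_g$ restricted to any compact-type stratum is the product of the vertex $\mathrm{DR}$'s times the vertex $\lambda$'s---but even then the resulting sum over trees must still be matched to the closed form, and that combinatorics remains to be done.
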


\begin{proof} Noting that this integral is a (homogenenous) polynomial in $a_1,a_2$~\cite[Lemma 3.2]{Bur}, it is sufficient to compute it for $a_1,a_2>0$. To this end, we use the recursion relation for the $\psi$-class on a double ramification cycle in~\cite[Theorem 4]{BSSZ}, which implies (after we multiply the corresponding expression by $\lambda_g$ and cancel the terms vanishing for the dimensional reasons) that
\begin{align} \label{eq:BasicRelDR}
	& (a_1+a_2)(2g+1) \int_{\oM_{g,3}} \mathrm{DR}_g(-a_1-a_2,a_1,a_2)\lambda_g \psi_1^{g} = (a_1+a_2) \int_{\oM_{g,2}} \mathrm{DR}_{g}(-a_1-a_2,a_1+a_2)\lambda_{g} \psi_1^{g-1} +
	\\ \notag & 2 \int_{\oM_{g-1,3}} \mathrm{DR}_{g-1}(-a_1-a_2,a_1,a_2)\lambda_{g-1} \psi_1^{g-1} \left( a_1 \int_{\oM_{1,2}} \mathrm{DR}_{1}(-a_1,a_1)\lambda_{1} + a_2 \int_{\oM_{1,2}} \mathrm{DR}_{1}(-a_2,a_2)\lambda_{1} \right).
\end{align}
This recursion relation is exactly the same as the one used in~\cite{sha} for the intersection numbers with the cycles of admissible covers. Another reincarnation of this relation is the recursion of Buryak--Rossi for the Hamiltonian densities of the KdV hierarchy in~\cite{BurRossi}.

The integrals $\int_{\oM_{g,2}} \mathrm{DR}_g(-b,b)\lambda_g\psi_1^{g-1}$ can either be computed by the same type of recursion, or, otherwise, using the bamboo formula for $\mathrm{DR}_g(-b,b)\lambda_g$ conjectured in~\cite[Conjecture 2.1]{BHS} and proved in~\cite[Theorem 2.2]{BS-bamboo-proof} it is straightforward to see that
\begin{align}
	\int_{\oM_{g,2}} \mathrm{DR}_g(-b,b)\lambda_g\psi_1^{g-1} = b^{2g} 	\int_{\oM_{g,2}} \psi_1^{3g-1} = \frac{b^{2g}}{24^g g!}.
\end{align}
Thus Equation~\eqref{eq:BasicRelDR} is equivalent to the following recursion relation:
\begin{align}
		& (2g+1) \int_{\oM_{g,3}} \mathrm{DR}_g(-a_1-a_2,a_1,a_2)\lambda_g \psi_1^{g} =  \frac{(a_1+a_2)^{2g}}{24^g g!}+
	\\ \notag & \frac{a_1^2-a_1a_2+a_2^2}{12} \int_{\oM_{g-1,3}} \mathrm{DR}_{g-1}(-a_1-a_2,a_1,a_2)\lambda_{g-1} \psi_1^{g-1} . 
\end{align}
It then follows directly that Equation~\eqref{eq:DR-intersection} satisfies this recursion.
\end{proof}

\section{A new proof of the socle intersection numbers}

A direct corollary of Propositions~\ref{prop:relation} and~\ref{prop:intersection} is the following:

\begin{cor} Equation~\eqref{eq:socle} holds for all $g\geq 1$, $n\geq 1$, $d_1,\dots,d_n\geq 0$ such that $d_1+\cdots+d_n=g-2+n$. 
\end{cor}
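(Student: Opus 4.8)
The plan is to pair the tautological relation of Proposition~\ref{prop:relation} against a $\psi$-monomial of complementary degree and read off a closed system for the socle numbers, which I abbreviate $\langle d_1,\dots,d_m\rangle\defeq\int_{\oM_{g,m}}\lambda_g\lambda_{g-1}\prod_i\psi_i^{d_i}$. Since both sides of~\eqref{eq:Relation} have codimension $2g-2+m$ in $\oM_{g,m}$ while $\dim\oM_{g,m}=3g-3+m$, the natural test classes are the monomials $\prod_{i=1}^m\psi_i^{c_i}$ with $\sum_i c_i=g-1$; I take $n=m$ throughout.

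First I would evaluate the left-hand side. By the projection formula the pairing of each summand with $\prod_i\psi_i^{c_i}$ factorizes over the vertices: the leaf classes pull back without nodal correction, $\mathsf{b}_{\Gamma,g}^*\psi_i=\psi_i$, so the contribution of $v_i$ is $\int_{\oM_{g(v_i),3}}\mathrm{DR}_{g(v_i)}(0,1,-1)\lambda_{g(v_i)}\psi_1^{c_i}$. This integrand has degree $2g(v_i)+c_i$ on a space of dimension $3g(v_i)$, so only the genus assignment $g(v_i)=c_i$ survives, and the constraint $\sum_i g(v_i)=g-1$ is exactly $\sum_i c_i=g-1$. As the surviving integrand depends on a wheel only through the genera, all $(m-1)!$ oriented necklaces contribute equally and cancel the factor $\tfrac1{(m-1)!}$. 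Proposition~\ref{prop:intersection} with $a_1=1,\ a_2=-1$ (so $a_1+a_2=0$ annihilates every $j\ge1$ term while $a_1^2-a_1a_2+a_2^2=3$) gives each factor as $\tfrac{1}{(2c_i+1)!!\,4^{c_i}}$, whence the left-hand side equals $\prod_{i=1}^m\frac{1}{(2c_i+1)!!\,4^{c_i}}$.

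On the right-hand side I would expand $\prod_i\psi_i\sum_i\psi_i^{-1}=\sum_{j}\prod_{i\ne j}\psi_i$ and integrate against $\lambda_g\lambda_{g-1}$, producing $C_{g,m}\sum_{j=1}^m\langle c+\mathbf{1}-e_j\rangle$, where $C_{g,m}=\tfrac{2(2g)!}{(-1)^{g-1}B_{2g}(2g-2+m)!}$ and $c+\mathbf{1}-e_j$ denotes the exponent vector $(c_1+1,\dots,c_j,\dots,c_m+1)$. Equating the two sides yields the master relation $\sum_{j=1}^m\langle c+\mathbf{1}-e_j\rangle=R(c)$, where $R(c)\defeq\tfrac{1}{C_{g,m}4^{g-1}}\prod_i\frac{1}{(2c_i+1)!!}$, for every $c$ with $\sum_i c_i=g-1$. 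That~\eqref{eq:socle} solves this is a one-line check: writing $K_{g,m}$ for its prefactor and using $\tfrac{1}{(2c_j-1)!!}=(2c_j+1)\tfrac{1}{(2c_j+1)!!}$ together with $\sum_j(2c_j+1)=2g-2+m$, the left-hand side becomes $K_{g,m}(2g-2+m)\prod_i\frac{1}{(2c_i+1)!!}$, and the claim reduces to the elementary identity $C_{g,m}K_{g,m}(2g-2+m)=4^{-(g-1)}$, which follows from $C_{g,m}K_{g,m}=\tfrac{1}{2^{2g-2}(2g-2+m)}$.

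The remaining and, I expect, most delicate step is to deduce that these relations actually determine the socle numbers, since the master relation controls only the symmetric sum $\sum_j\langle c+\mathbf{1}-e_j\rangle$ and every configuration occurring in it carries at most one vanishing exponent. I would argue by induction on $m$. The configurations with a single zero reduce to $(m-1)$-point numbers through the string equation, and the reduced value agrees with~\eqref{eq:socle} thanks to the factorial identity $K_{g,m}=(2g-3+m)K_{g,m-1}$; the case $m=1$ is read off directly from the master relation. For the configurations with all exponents positive I would superimpose a downward induction on the largest exponent $M=\max_i d_i$: taking $j$ at a position achieving $M$ and $c=d-\mathbf{1}+e_j$, the master relation expresses $\langle d\rangle$ through $R(c)$, through one-zero numbers (already fixed at level $m$), and through the configurations $d+e_j-e_{j'}$, each of which has maximal exponent $M+1$ and is thus known from the previous stage of the downward induction. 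Every equation then isolates exactly one new number, the system is triangular, and its unique solution is~\eqref{eq:socle}. The real labour is confined to organizing these two nested inductions and to the Bernoulli--factorial bookkeeping that makes all the constants close up.
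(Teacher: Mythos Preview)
Your proposal is correct, and the computation at its core coincides with the paper's: you pair Proposition~\ref{prop:relation} against $\prod_i\psi_i^{c_i}$ with $\sum c_i=g-1$, factor the left-hand side over vertices, force $g(v_i)=c_i$ by dimension, and evaluate each factor via Proposition~\ref{prop:intersection} at $a_1=1,\ a_2=-1$. Your identification of the right-hand side with $C_{g,m}\sum_j\langle c+\mathbf1-e_j\rangle$, the verification that \eqref{eq:socle} solves the resulting master relation, and your downward induction on $M=\max d_i$ are all sound (and indeed the terms $d+e_j-e_{j'}$ do have maximum exactly $M+1$).

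Where you and the paper diverge is in how the master relation is turned into a closed formula. The paper avoids your linear-algebra/uniqueness step entirely by one further use of the string equation, applied \emph{to the right-hand side rather than the left}: since
\[
\sum_{j=1}^m\langle c_1{+}1,\dots,c_j,\dots,c_m{+}1\rangle_{g,m}=\langle c_1{+}1,\dots,c_m{+}1,0\rangle_{g,m+1},
\]
the pairing of \eqref{eq:Relation} with $\prod_i\psi_i^{c_i}$ already computes each single socle number $\langle d_1,\dots,d_m,0\rangle_{g,m+1}$ with $d_i=c_i+1\ge1$ in closed form; no system has to be solved. The paper's opening reduction ``it suffices to prove \eqref{eq:socle} when $d_{m+1}=0$ and $d_1,\dots,d_m\ge1$'' is then logically the same as your downward induction (express any all-positive $\langle e\rangle_{g,n}$ via the string equation for $\langle e_1{+}1,e_2,\dots,e_n,0\rangle_{g,n+1}$ and iterate), but the paper leaves that routine step implicit. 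In short: same ingredients, but recognizing the sum $\sum_j\langle c+\mathbf1-e_j\rangle$ as a single $(m{+}1)$-point number via the string equation replaces your two nested inductions by a one-line identification.
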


\begin{proof} Using the string equation~\cite{witten_91}, we see that it is sufficient to prove~\eqref{eq:socle} for $n=m+1$, $m\geq 1$, with $d_{m+1}=0$ and $d_1,\dots,d_m\geq 1$. Then, using the string equation again, we see that under these assumptions 
\begin{align} \label{eq:socle-1}
	& \frac{2\cdot (2g)!} {(-1)^{g-1}B_{2g}(2g-3+n)! } \int_{\oM_{g,n}} \lambda_g\lambda_{g-1} \prod_{i=1}^{n} \psi_i^{d_i} =
	\\ \notag & \int_{\oM_{g,m}} \bigg(\frac{2\cdot (2g)!} {(-1)^{g-1}B_{2g}(2g-2+m)! } \lambda_g\lambda_{g-1} \prod_{i=1}^m\psi_i \sum_{i=1}^m \frac{1}{\psi_i}\bigg) \prod_{i=1}^m \psi_i^{d_i-1}.
\end{align}
By Proposition~\ref{prop:relation} the latter expression is equal to
\begin{align}
	\frac{1}{(m-1)!} \!\!\!\!\sum_{\footnotesize\substack{\Gamma \in \vec{N}\\ g\colon V(\Gamma)\to\mathbb{Z}_{\geq 0}}} \!\!\!\! \prod_{i=1}^m \int_{\oM_{g(v_i),3}} \mathrm{DR}_{g(v_i)}(0,1,-1)\lambda_{g(v_i)}\psi_1^{d_i-1}. 
\end{align}
For the dimensional reason, the integrals in this expression are non-trivial if and only if $g(v_i) = d_i-1$. Thus the genus function $g$ on the vertices is uniquely determined for every graph $\Gamma$, and there are exactly $(m-1)!$ identical summands in the sum. Thus, the expression is equal to
\begin{align}
\prod_{i=1}^m \int_{\oM_{d_i-1,3}} \mathrm{DR}_{d_i-1}(0,1,-1)\lambda_{d_i-1}\psi_1^{d_i-1}, 
\end{align}
which is equal to 
\begin{align}
\prod_{i=1}^m \frac{1}{(2d_i-1)!! 2^{2d_i-2}} = \frac{1}{2^{2g-2} }\prod_{i=1}^m \frac{1}{(2d_i-1)!!},
\end{align}
 by Proposition~\ref{prop:intersection}.
\end{proof}

\printbibliography

\end{document}